\theoremstyle{plain}
\newtheorem{theorem}{Theorem}[section]
\newtheorem{lemma}[theorem]{Lemma}
\newtheorem{corollary}[theorem]{Corollary}
\theoremstyle{definition}
\newtheorem{definition}[theorem]{Definition}
\newtheorem*{notation}{Notation}
\theoremstyle{remark}
\newtheorem*{remark}{Remark}
\newcommand{\frakM}{\mathfrak{M}} 
\newcommand{\calC}{\mathcal{C}}
\newcommand{\calI}{\mathcal{I}}
\newcommand{\calD}{\mathcal{D}}
\newcommand{\frakS}{\mathfrak{S}} 
\DeclarePairedDelimiter{\ceil}{\lceil}{\rceil}
\title{Trivial 
Meet and Join within the Lattice of Monotone Triangles}
\author{John Engbers\thanks{john.engbers@marquette.edu; Department of Mathematics, Statistics and Computer Science, Marquette University, Milwaukee, WI 53201}
\and
Adam Hammett\thanks{adam.hammett@bethelcollege.edu; Department of Mathematical Sciences, Bethel College, Mishawaka, IN 46545}} 
\date{\today }
\begin{document}

\maketitle

\begin{abstract}
\noindent The lattice of monotone triangles $(\frakM_n,\le)$ ordered by entry-wise comparisons is studied. Let $\tau_{\min}$ denote the unique minimal element in this lattice, and $\tau_{\max}$ the unique maximum. The number of $r$-tuples of monotone triangles $(\tau_1,\ldots,\tau_r)$ with minimal infimum $\tau_{\min}$ (maximal supremum $\tau_{\max}$, resp.) is shown to asymptotically approach $r|\frakM_n|^{r-1}$ as $n \to \infty$. Thus, with high probability this event implies that one of the $\tau_i$ is $\tau_{\min}$ ($\tau_{\max}$, resp.). Higher-order error terms are also discussed.
\end{abstract}

\section{Introduction and statement of results}

Let $n \geq 1$ be an integer and $[n]:=\{1,2,\ldots,n\}$.  A \emph{monotone triangle} of size $n$ (or 
 \emph{Gog triangle} in the terminology of Doron Zeilberger \cite{Zeilberger}) is a triangular arrangement of $n(n+1)/2$ integers with $i$ elements in row $i$ ($i\in [n]$) taken from the set $[n]$ so that if $a(i,j)$ denotes the $j\text{th}$ entry in row $i$ (counted from the top), then
\begin{align*}
    &1.~ a(i,j) < a(i,j+1), \,\, 1 \leq j < i \textrm{, and}\\
    &2.~ a(i,j) \leq a(i-1,j) \leq a(i,j+1), \,\, 1 \leq j < i.
\end{align*}
Note that this definition forces the last row to be $a(n,1)=1$, $a(n,2)=2$, $\dots$, $a(n,n)=n$. We let $\frakM_n$ denote the set of all monotone triangles of size $n$, and we let $\tau = (\tau(i,j))$ be a generic element from this set.  It is well-known that there is a bijection between $\frakM_n$ and the set of all $n\times n$ alternating-sign matrices (ASMs), which are the $n \times n$ matrices of $0$s, $1$s, and $-1$s so that the sum of the entries in each row and column is $1$ and the non-zero entries in each row and column alternate in sign.  This bijection is often seen via the column-sum matrix form of an ASM; an example of a monotone triangle of size $4$, along with the corresponding column-sum matrix and ASM, is given in Figure \ref{fig-MT}.

\begin{figure}[ht]
\begin{center}
\begin{tabular}{ ccccccccc }
   &           &       &           &     3     &          &       &              \\
   &           &       &     2     &           &     4    &       &              \\
   &           &   1   &           &     3     &          &   4   &              \\
  &     1  &      &     2      &   &        3  &  &     4
\end{tabular}
$\iff$
$\left(
\begin{tabular}{cccc}
0 & 0 & 1 & 0\\
0 & 1 & 0 & 1\\
1 & 0 & 1 & 1\\
1 & 1 & 1 & 1\\
\end{tabular}\right)$
$\iff$
$\left(
\begin{tabular}{cccc}
0 & 0 & 1 & 0\\
0 & 1 & -1 & 1\\
1 & -1 & 1 & 0\\
0 & 1 & 0 & 0\\
\end{tabular}\right)$
\end{center}
\caption{A monotone triangle of size $4$, along with the corresponding column-sum and alternating-sign matrices.}
\label{fig-MT}
\end{figure}
The set $\frakM_n$ is also in bijection with the square ice configurations (or six-vertex configurations) of order $n$ in statistical mechanics; for descriptions of these bijections, along with other sets in bijection with $\frakM_n$, see the surveys of James Propp \cite{ProppFaces}, David Bressoud and Propp \cite{BressoudPropp}, or the wonderful exposition by Bressoud about the proof of the ASM conjecture \cite{Bressoud}, which concerns the \emph{number} of monotone triangles, denoted $A(n):=|\frakM_n|$ (as this was first framed for ASMs). Specifically, the ASM conjecture is simply that
\begin{equation}\label{eqn-A(n)formula}
A(n) = \prod_{k=0}^{n-1} \frac{(3k+1)!}{(n+k)!}.
\end{equation}
The formula (\ref{eqn-A(n)formula}) was first proved by Zeilberger \cite{Zeilberger}, with another proof (using square ice) given by Greg Kuperberg \cite{Kuperberg}. Bressoud's book \cite{Bressoud} not only lays out this proof in complete detail, but also delivers the historical narrative surrounding this triumph.

For the asymptotic value of $A(n)$, Bleher and Fokin \cite{BleherFokin} found that
\begin{equation*}
A(n) = c\left( \frac{3\sqrt{3}}{4} \right)^{n^2}\times n^{-5/36}\times \left( 1 - \frac{115}{15552n^2} + O\left( n^{-3} \right)  \right),
\end{equation*}
where $c>0$ is a constant.  Using Stirling's approximation $n! \sim e^{n\log n - n}\sqrt{2\pi n}$ it becomes clear that $A(n)$ and $n!$ are far apart for large $n$. We shall not need this in our present study, however.  We simply need the fact that $A(n) \geq n!$, which we have already seen is clear from permutation matrices being a subset of the ASMs, and can also be seen directly from monotone triangles  (by iteratively building a monotone triangle from the top to the bottom, and for each new row adding one new number to the previous row). Before departing from the question of enumerating objects related to monotone triangles, we also mention some recent work of Ilse Fischer \cite{FischerMTBottomRow,FischerMTBottomRowSimp,FischerMTTrapezoid} which has focused on finer enumerative questions regarding a looser definition of monotone triangles where the bottom row is simply required to be a strictly increasing collection of $n$ positive integers (rather than the \emph{specific} integers $1, 2, \dots, n$), and also the enumeration of so-called \emph{monotone trapezoids}. 
Also, Jessica Striker \cite{Striker} has recently settled the long-sought question of a bijection between descending plane partitions and ASMs in the case where the ASM is simply a permutation matrix. Striker's bijection involves a particular weighting of permutation inversions, a concept that we explore further in a forthcoming work extending this present study.

Our present work will focus exclusively on the lattice properties of $(\frakM_n,\le)$, where ``$\le$'' in this poset is defined by entry-wise comparisons. That is, given monotone triangles $\tau_1 = (\tau_1(i,j))$ and $\tau_2 = (\tau_2(i,j))$ in $\frakM_n$, we define $\tau_1 \le \tau_2$ if and only if $\tau_1(i,j) \le \tau_2(i,j)$ for $1\le j \le i \le n$. Of course, it is entirely possible that two monotone triangles may not be comparable at all. For example, in the case $n=3$ the following two monotone triangles are incomparable:
\begin{equation}\label{tau1-tau2-ex}
\tau_1 :=
\begin{tabular}{ ccccc }
          &           &     3     &          &                   \\
          &     1     &           &     3    &                   \\
      1   &           &     2     &          &   3               \\
\end{tabular}
\quad \textrm{ and } \quad \tau_2 := 
\begin{tabular}{ ccccc }
          &           &     2     &          &                   \\
          &     2     &           &     3    &                   \\
      1   &           &     2     &          &   3               \\
\end{tabular}.
\end{equation}
Indeed, all entries in both monotone triangles are the same except for the first entries in rows $1$ and $2$, and there we have $\tau_1(1,1)>\tau_2(1,1)$ and yet $\tau_1(2,1)<\tau_2(2,1)$. However, 
this poset is indeed a lattice under the infimum and supremum operations
\[
\inf(\tau_1,\tau_2):=(\min\{\tau_1(i,j),\tau_2(i,j)\}) \textrm{\,\, and \,\,}
\sup(\tau_1,\tau_2):=(\max\{\tau_1(i,j),\tau_2(i,j)\}).
\]
Infimums and supremums for $r$ monotone triangles, where $r > 2$, are defined analogously. It is left to the interested reader to check that these inf and sup operations do deliver monotone triangles, and satisfy the requirements of an infimum or supremum. 

Interestingly, if we restrict this ordering to the collection of order-$n$ permutations $\frakS_n \subseteq \frakM_n$ 
with each permutation identified with its corresponding monotone triangle (through the correspondence described above), 
we obtain a poset $(\frakS_n,\le)$ 
referred to as the \emph{Bruhat ordering} on $\frakS_n$ 
(see, e.g., H. and Boris Pittel's work \cite{HammettPittel} or \cite{Hammett} for an exposition on the comparability properties of $(\frakS_n,\le)$). 
It was C. Ehresmann \cite{Ehresmann} who first discovered this connection between $\frakS_n$ 
under Bruhat ordering and their monotone triangle counterparts under entry-wise comparisons. It is worth mentioning here that the infimum of two elements of $\frakS_n$, when viewed as elements of $\frakM_n$, need not lie in $\frakS_n$; 
indeed, consider the $n=3$ example above in  (\ref{tau1-tau2-ex}). There we have
\[
\inf(\tau_1,\tau_2)= 
\begin{tabular}{ ccccc }
          &           &     2     &          &                   \\
          &     1     &           &     3    &                   \\
      1   &           &     2     &          &   3               \\
\end{tabular},
\]
and it is easy to see that this is not a monotone triangle arising from any permutation in $\frakS_3$ even though both $\tau_1$ and $\tau_2$ do (they correspond to the permutations $312$ and $231$, respectively; a monotone triangle coming from a permutation will necessarily have the property that each row is a subset of the very next row). 
Moreover, when the permutations $312$ and $231$ are viewed as elements of the Bruhat poset $(\frakS_3,\le)$ there are two potential infimums, namely $132$ and $213$. However, these two permutations are incomparable in this poset and thus $(\frakS_3,\le)$ is not a lattice. This is a general feature of the Bruhat permutation-poset embedded within the monotone triangle lattice: the lattice property enjoyed by $(\frakM_n,\le)$ is not inherited by $(\frakS_n,\le)$. 
In fact, Richard Stanley notes in exercise 7.103 of \cite{Stanley} that $(\frakM_n,\le)$ is the unique \emph{MacNeille completion} of $(\frakS_n,\le)$ to a lattice. A. Lascoux and M.~P. Sch\"utzenberger \cite{LascouxSchutz} were the first to prove this fact. We now focus exclusively on a particular feature of this lattice.

Let $\tau_{\min}$ denote the (unique) minimal element in the monotone triangle lattice. That is,
\[
\tau_{\min} \coloneqq
\begin{tabular}{ ccccccccc }
   &           &       &           &     1     &          &       &          &    \\
   &           &       &     1     &           &     2    &       &          &    \\
   &           &   1   &           &     2     &          &   3   &          &    \\
   & $\iddots$ &       & $\iddots$ &           & $\ddots$ &       & $\ddots$ &    \\
 1 &           &   2   &           & $\cdots$  &          & $n-1$ &          & $n$
\end{tabular}.
\]
Similarly, let $\tau_{\max}$ denote the (unique) maximal element:
\[
\tau_{\max} \coloneqq
\begin{tabular}{ ccccccccc }
   &           &       &           &     $n$     &          &       &          &    \\
   &           &       &     $n-1$     &           &     $n$    &       &          &    \\
   &           &   $n-2$   &           &     $n-1$     &          &   $n$   &          &    \\
   & $\iddots$ &       & $\iddots$ &           & $\ddots$ &       & $\ddots$ &    \\
 1 &           &   2   &           & $\cdots$  &          & $n-1$ &          & $n$
\end{tabular}.
\]
In this paper, we are interested in two questions. How likely is it that $r$ independent and uniformly random monotone triangles $\tau_1,\ldots,\tau_r\in \frakM_n$ will have \emph{trivial meet}, i.e.~$\inf(\tau_1,\ldots,\tau_r) = \tau_{\min}$, and how likely are they to have \emph{trivial join}, i.e.~$\sup(\tau_1,\ldots,\tau_r) = \tau_{\max}$? Namely, we want \emph{sharp} estimates on the probabilities
\begin{align*}
&1.~ p_{\min} \coloneqq P(\inf (\tau_1,\ldots,\tau_r) = \tau_{\min})
\textrm{, and}\\
&2.~ p_{\max} \coloneqq P(\sup (\tau_1,\ldots,\tau_r) = \tau_{\max}).
\end{align*}
These questions have previously been studied for the lattice of set partitions ordered by refinement (see Canfield \cite{Canfield} and Pittel \cite{PittelMeetJoin}), the lattice of set partitions of type $B$ (see Chen and Wang \cite{ChenWang}), and the lattice of permutations under weak ordering (see H. \cite{Hammett}). 

First of all, we show that 
\begin{equation}\label{eqn-minmax}
p_{\min} = p_{\max},
\end{equation}
reducing our problem to the consideration of $p_{\min}$ only.

\begin{proof}[Proof of (\ref{eqn-minmax})]
Given a monotone triangle $\tau=(\tau(i,j))\in\frakM_n$, introduce the \emph{rank-reversed} monotone triangle $\bar{\tau}\in \frakM_n$ obtained by the transformation of entries
\[
\tau(i,j)\mapsto n-\tau(i,j)+1
\]
and then reversing the order of each row. For example ($n=4$)
\[
\begin{array}{c}
\tau =
\begin{tabular}{ ccccccc }
  &   &   & 3 &   &   &   \\
  &   & 2 &   & 3 &   &   \\
  & 1 &   & 2 &   & 4 &   \\
1 &   & 2 &   & 3 &   & 4 \\
\end{tabular}
\mapsto
\bar{\tau} =
\begin{tabular}{ ccccccc }
  &   &   & 2 &   &   &   \\
  &   & 2 &   & 3 &   &   \\
  & 1 &   & 3 &   & 4 &   \\
1 &   & 2 &   & 3 &   & 4 \\
\end{tabular}.
\end{array}
\]
Certainly if $\tau_1,\ldots,\tau_r\in \frakM_n$ are uniformly random and independent, the same is true of $\bar{\tau_1},\ldots,\bar{\tau_r}\in \frakM_n$. Moreover, it is easy to check that
\[
\inf (\tau_1,\ldots,\tau_r) = \tau_{\min} \quad \iff \quad \sup (\bar{\tau_1},\ldots,\bar{\tau_r}) = \tau_{\max}.
\]
But this means that $p_{\min} = p_{\max}$.
\end{proof}

\noindent We are ready to state our main result.

\begin{theorem}\label{thm-main}
Fix an integer $r\ge 1$. Given $r$ independent and uniformly random monotone triangles $\tau_1,\ldots,\tau_r \in \frakM_n$, we have
\[
p_{\min} \sim \frac{r}{A(n)}, \,\, n\to\infty.
\]
\end{theorem}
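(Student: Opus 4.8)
The plan is to translate the entry-wise condition $\inf(\tau_1,\dots,\tau_r)=\tau_{\min}$ into a purely row-wise covering condition, and then to count. Call row $i$ of a monotone triangle $\tau$ \emph{flat} if $\tau(i,j)=j$ for every $j\le i$; equivalently, since each row strictly increases with $\tau(i,j)\ge j$, this holds iff $\tau(i,i)=i$. First I would record the staircase structure of the flat cells: the monotonicity conditions give $\tau(i,j)=j \Rightarrow \tau(i',j)=j$ for all $i'\ge i$ and $\tau(i,j')=j'$ for all $j'\le j$, so $\{(i,j):\tau(i,j)=j\}$ is closed under moving down and to the left. Writing $R(\tau):=\{i\in[n-1]:\text{row }i\text{ is flat}\}$ (row $n$ is always flat) and noting $R(\tau)=[n-1]\iff\tau=\tau_{\min}$, the staircase structure yields the key equivalence
\[
\inf(\tau_1,\dots,\tau_r)=\tau_{\min}\quad\Longleftrightarrow\quad \bigcup_{k=1}^r R(\tau_k)=[n-1],
\]
because the meet can fail to equal $\tau_{\min}$ only at a topmost cell $(j,j)$ of a column, and covering $(j,j)$ means some $\tau_k$ has row $j$ flat. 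Thus trivial meet is exactly the event that every row $j\in[n-1]$ is flat in at least one $\tau_k$.

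Next I would establish a \emph{divider lemma}: a flat row rigidly decouples the triangle. Prescribing that row $i$ be flat forces the first $i$ entries of every lower row to be flat, and one checks that the part above row $i$ is an arbitrary monotone triangle of size $i$ while the part below (on columns $>i$) is an arbitrary one of size $n-i$; hence the number of $\tau$ with row $i$ flat is $A(i)A(n-i)$. Iterating, prescribing a set of flat rows chops $\tau$ into independent monotone triangles on the gaps, giving clean product formulas. The second ingredient is \emph{log-convexity of $A$}, which follows from the product formula \eqref{eqn-A(n)formula} (the ratios $A(k)/A(k-1)$ increase): for $a,b\ge1$ this gives $A(a)A(b)\le A(1)A(a+b-1)=A(a+b-1)$, the maximum of $A(a)A(n-a)$ over $1\le a\le n-1$ being attained only at $a\in\{1,n-1\}$. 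In particular $\sum_{i=1}^{n-1}A(i)A(n-i)\sim 2A(n-1)=o(A(n))$, the quantitative statement that being flat on a given row is rare and that splitting the triangle is expensive.

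With the reduction in hand, the main term and lower bound come for free. Any tuple in which some $\tau_k=\tau_{\min}$ (equivalently $R(\tau_k)=[n-1]$) has trivial meet, and the number of such tuples is
\[
A(n)^r-(A(n)-1)^r=rA(n)^{r-1}-\tbinom{r}{2}A(n)^{r-2}+\cdots\sim rA(n)^{r-1}.
\]
Hence $N_r(n):=\#\{\text{trivial meet}\}\ge(1-o(1))\,rA(n)^{r-1}$, i.e. $p_{\min}\ge(1-o(1))r/A(n)$. (Equivalently, expanding the inclusion--exclusion identity $N_r=\sum_{U\subseteq[n-1]}(-1)^{|U|}g(U)^r$ in powers of $A(n)$, where $g(U)$ is the number of $\tau$ with no flat row in $U$, isolates $rA(n)^{r-1}$ as the leading term.)

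The crux, and the step I expect to be the main obstacle, is the matching upper bound: the number $E_r(n)$ of trivial-meet tuples in which \emph{no} coordinate equals $\tau_{\min}$ must be shown to be $o(A(n)^{r-1})$. Here every $R(\tau_k)$ is proper, yet $\bigcup_k R(\tau_k)=[n-1]$, so the covering necessarily uses a genuine split. Grouping tuples by row-types (which coordinates are flat in each row) and applying the divider lemma turns $E_r(n)$ into a weighted sum over run-patterns, in which each maximal run of flat rows of length $\ell$ contributes a factor $A(\ell+1)$ and each run of non-flat rows contributes a block count bounded by $A(\cdot)$. By log-convexity, every additional run boundary forces a merge $A(a)A(b)\le A(a+b-1)$ and so costs a factor of order $A(n-1)/A(n)$; since there are only polynomially many low-complexity (few-run) patterns, the whole sum is $O(\mathrm{poly}(n)\,A(n-1)\,A(n)^{r-2})=o(A(n)^{r-1})$, using $\mathrm{poly}(n)A(n-1)/A(n)\to0$. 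I would carry out the prototype $r=2$ first---there $E_2(n)=\sum_{\varnothing\ne R\subsetneq[n-1]}\beta(R)\,(c([n-1]\setminus R)-1)$, with $\beta(R)$ the number of $\tau$ whose flat-row set is exactly $R$ and $c(S)$ the number flat on all of $S$, bounded via the product formulas by $O(nA(n-1))=o(A(n))$---and then handle general $r$ by the same run/merge bookkeeping. Combining this with the lower bound gives $N_r(n)\sim rA(n)^{r-1}$, i.e. $p_{\min}\sim r/A(n)$.
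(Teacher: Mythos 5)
Your first three ingredients are sound and in fact coincide with the paper's: your flat-row covering equivalence is the paper's distinguished-row observation, your divider lemma is Lemma \ref{lem-distinguishedrows}, your log-convexity is Lemma \ref{lem-increaseAi}, and your lower bound is the paper's (\ref{eqn-lower bound}). The genuine gap is in the step you yourself flagged as the crux. The claim that ``every additional run boundary costs a factor of order $A(n-1)/A(n)$'' is false: the merge loss $A(g_1+1)A(g_2+1)/A(g+1)$ is exponentially small only when the run being split is \emph{long}; splitting a run of bounded length costs only $O(1)$. Concretely, take $r=2$ with $R(\tau_1)$ the odd rows and $R(\tau_2)\supseteq$ the even rows of $[n-1]$. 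In your own sum $E_2(n)=\sum_R \beta(R)\,(c([n-1]\setminus R)-1)$, this pattern has $B\approx n$ boundaries, $\beta(R)\ge 1$, and $c([n-1]\setminus R)=\prod A(2) = 2^{\lfloor (n-1)/2\rfloor}$, so this single term is at least $2^{n/2-O(1)}\to\infty$, whereas geometric charging at rate $cA(n-1)/A(n)$ per boundary would assign such patterns a bound of the form $A(n-1)\bigl(cA(n-1)/A(n)\bigr)^{B-2}\to 0$. So the charging scheme is not a valid upper bound, and the remark that ``there are only polynomially many low-complexity patterns'' leaves the exponentially many high-complexity patterns --- whose individual terms can be exponentially large, as above --- entirely uncontrolled.

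These patterns are indeed negligible in aggregate, but proving it requires comparing their total contribution (up to $2^{rn}$ patterns, each with count roughly $e^{O(nL)}$ when all runs have length at most $L$) against the super-exponential size of $A(n-1)$; that comparison needs a quantitative decay estimate such as the paper's Lemma \ref{lem-ratio}, $A(n-c)/A(n)\le (2/3)^{c(2n-c-1)/2}$, deployed so that per-pattern count decay beats pattern entropy. This is exactly the architecture of the paper's upper bound: if some coordinate has a block of at least $n-6r$ consecutive distinguished rows, the patterns have only polynomial entropy and are counted directly (classes $\calC_n,\ldots,\calC_{n-6r}$ and Lemma \ref{lem-small classes}); if not, the crude $2^{rn}$ entropy bound is beaten because each pattern contributes at most $A(6r)A(3r)A(n-3r)A(n)^{r-2}$ and $2^{rn}(2/3)^{3rn}\to 0$. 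Your stated toolkit --- log-convexity plus $\mathrm{poly}(n)A(n-1)/A(n)\to 0$ --- never isolates an estimate of this strength, and the argument cannot close without one: your claimed bounds $E_2(n)=O(nA(n-1))$ and $E_r(n)=o(A(n)^{r-1})$ are true, and your run-based bookkeeping can be repaired to prove them, but only by inserting precisely this ratio estimate together with the global entropy-versus-decay case analysis, which is the actual heart of the paper's proof rather than a routine afterthought.
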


\noindent Notice that the theorem is obvious when $r=1$, since in this case $p_{\min} = 1/A(n)$.  Because of this, we assume that $r\geq 2$ for the remainder of the paper; this theorem says that essentially nothing changes for these $r$: almost all ways of obtaining $\tau_{\min}$ occur when some $\tau_i = \tau_{\min}$!

It is possible to extend the proof of Theorem \ref{thm-main} to 
higher-order terms.  In particular, we will describe the modifications needed to obtain the following refinement.

\begin{theorem}\label{thm-main2}
Fix an integer $r\ge 1$. Given $r$ independent and uniformly random monotone triangles $\tau_1,\ldots,\tau_r \in \frakM_n$, we have
\[
p_{\min} = \frac{rA(n)^{r-1} + 2r(r-1)A(n-1)A(n)^{r-2} + \Theta(A(n-2)A(n)^{r-2})}{A(n)^r}.
\]
\end{theorem}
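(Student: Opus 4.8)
The plan is to build on the proof of Theorem~\ref{thm-main} by computing the exact number of $r$-tuples with trivial meet and expanding it to three orders. The starting point is a reformulation: writing $D(\tau):=\{(i,j): \tau(i,j)>j\}$ for the set of entries at which $\tau$ strictly exceeds $\tau_{\min}$ (recall $\tau_{\min}(i,j)=j$), and noting that every monotone triangle satisfies $\tau(i,j)\ge j$, we have $\inf(\tau_1,\ldots,\tau_r)=\tau_{\min}$ if and only if the triangles \emph{jointly cover}, i.e. $\bigcap_{k=1}^r D(\tau_k)=\emptyset$. Let $N_r$ count such tuples, so $p_{\min}=N_r/A(n)^r$. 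I would first peel off the tuples containing an exact copy of $\tau_{\min}$, which cover automatically:
\[
N_r = \bigl(A(n)^r-(A(n)-1)^r\bigr) + N_r',
\]
where $N_r'$ counts covering tuples with $\tau_k\neq\tau_{\min}$ for every $k$. The bracket expands as $rA(n)^{r-1}-\binom{r}{2}A(n)^{r-2}+O(A(n)^{r-3})$, supplying the leading term, and everything else must come from $N_r'$.

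The key structural input is the rigidity of triangles that are nearly minimal. First I would verify that if $\tau$ has exactly one excited entry then that entry must be the rightmost in its row and must exceed its minimal value by exactly one; hence the only such triangles are $\mu_i$ ($1\le i\le n-1$), obtained from $\tau_{\min}$ by changing the corner entry from $i$ to $i+1$, so $D(\mu_i)=\{(i,i)\}$. Next I would compute $m_i:=\#\{\tau:\tau(i,i)=i\}$, the number of triangles minimal at the $i$-th corner. A forcing argument shows $\tau(i,i)=i$ pins the entire $i$-th row to $(1,2,\ldots,i)$, so $\tau$ factors into a size-$i$ monotone triangle above and a minimal trapezoid below; identifying the latter with $\frakM_{n-i}$ yields
\[
m_i = A(i)\,A(n-i).
\]
By log-convexity of $A$ (evident from the Bleher--Fokin asymptotics), $m_i$ is maximized exactly at the two extreme corners $i=1$ and $i=n-1$, where it equals $A(n-1)$, and it decays geometrically toward the middle, with $\sum_{i=1}^{n-1} m_i = 2A(n-1)+\Theta(A(n-2))$.

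With these facts I would estimate $N_r'$ by classifying covering tuples according to whether some coordinate equals a $\mu_i$. If $\tau_k=\mu_i$, then $\tau_k$ is minimal everywhere except at $(i,i)$, so the tuple covers if and only if some \emph{other} coordinate is minimal at $(i,i)$; the number of admissible completions (all $\neq\tau_{\min}$) is $(A(n)-1)^{r-1}-(A(n)-m_i)^{r-1}$. Summing over the $r$ choices of the distinguished coordinate and over $i$, and inserting the expansion of $m_i$, this principal sum equals
\[
r\sum_{i=1}^{n-1}\Bigl[(A(n)-1)^{r-1}-(A(n)-m_i)^{r-1}\Bigr]
= 2r(r-1)A(n-1)A(n)^{r-2}+\Theta\!\bigl(A(n-2)A(n)^{r-2}\bigr),
\]
the two extreme corners furnishing the main term (here the second-order piece of the expansion contributes $O(A(n-1)^2A(n)^{r-3})=O(A(n-2)A(n)^{r-2})$, again by log-convexity). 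Combining this with the bracket above—whose $\binom{r}{2}A(n)^{r-2}$ correction is absorbed since $A(n)^{r-2}=o(A(n-2)A(n)^{r-2})$—produces the claimed formula.

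The hard part is making the last step rigorous, i.e. showing that the discrepancy between $N_r'$ and the principal sum is $O(A(n-2)A(n)^{r-2})$ (and that the genuine sub-leading contributions are $\Theta$ of this order, so that the error term is sharp on both sides). Two sources of discrepancy must be controlled. The inclusion--exclusion overcounting from tuples with two or more coordinates equal to $\mu$'s is mild: such tuples cover for \emph{any} choice of the remaining coordinates, so they number only $\Theta(A(n)^{r-2})$, well below the error. The delicate term is the covering tuples in which every non-minimal coordinate has at least two excited entries. Bounding their contribution requires estimates, in the spirit of the $m_i$ computation, on $m_S:=\#\{\tau:\tau|_S\text{ minimal}\}$ for small defect sets $S=D(\tau_k)$: one shows that forcing minimality at two corners already costs a factor comparable to $A(n-2)$, and that larger defect sets force still smaller counts with geometric decay, so that summing over all such configurations telescopes to $\Theta(A(n-2)A(n)^{r-2})$. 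Assembling these bounds completes the argument.
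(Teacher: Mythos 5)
Your structural setup is sound and is in fact equivalent to the paper's machinery: minimality at the corner $(i,i)$ pins all of row $i$, so your defect-covering condition is exactly the paper's condition that the distinguished-row sets $D_1,\ldots,D_r$ cover $[n]$, your $\mu_1$ and $\mu_{n-1}$ are the paper's $\tau_{\min}'$ and $\tau_{\min}''$, and your $m_i=A(i)A(n-i)$ is Lemma \ref{lem-distinguishedrows}. The genuine gap is the step you yourself flag as ``the hard part'': bounding the covering tuples in which every coordinate has at least two excited entries. Your proposed mechanism --- counts $m_S$ decaying geometrically in $|S|$, after which ``summing over all such configurations telescopes'' --- does not work as stated, because the number of configurations grows exponentially: there are up to $2^{rn}$ assignments of distinguished-row sets (and vastly more defect sets of entries), and a single coordinate with a \emph{large} defect set incurs no penalty whatsoever (most monotone triangles have large defect sets; the only constraint is the joint covering requirement, not anything about one coordinate). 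So the decay per configuration must be shown to beat this entropy, and that is precisely the content of the paper's Lemma \ref{lem-small classes} together with the estimate (\ref{eqn-case3}) for $\calC_{\le n-6r-1}$: the pigeonhole $|D_m|\ge n/r$, the split at the threshold of $n-6r$ maximal consecutive distinguished rows, and the numerical race $2^{rn}A(n-3r)/A(n)\to 0$ coming from Lemma \ref{lem-ratio}. Your proposal contains no substitute for this argument, so the claimed $O(A(n-2)A(n)^{r-2})$ bound on the residual class is unproven, and with it the whole theorem.

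A second, real problem is the sign bookkeeping in your principal sum. Expanding exactly,
\begin{align*}
r\sum_{i=1}^{n-1}\Bigl[(A(n)-1)^{r-1}-(A(n)-m_i)^{r-1}\Bigr]
&= r(r-1)A(n)^{r-2}\sum_{i}(m_i-1)\\
&\quad - r\binom{r-1}{2}A(n)^{r-3}\sum_{i}(m_i^2-1)+\cdots,
\end{align*}
and since $\sum_i m_i^2 = 2A(n-1)^2\bigl(1+o(1)\bigr)$ while $A(n-1)^2/\bigl(A(n-2)A(n)\bigr)\to 16/27$ (from the ratio formula or the Bleher--Fokin asymptotics), the negative second term has order exactly $A(n-2)A(n)^{r-2}$ --- the same as the positive contribution $4r(r-1)A(n-2)A(n)^{r-2}$ arising from $m_2+m_{n-2}$. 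For $r\ge 9$ it is strictly larger in magnitude, so your displayed assertion that the principal sum equals $2r(r-1)A(n-1)A(n)^{r-2}$ plus a \emph{positive} term of order $A(n-2)A(n)^{r-2}$ is false. Consequently your promise that the theorem's error term is ``sharp on both sides'' cannot be delivered without tracking these cancellations against the (nonnegative) residual class; note that the paper itself does not prove genuine two-sided sharpness --- its lower bound (\ref{eqn-2ndLB}) carries error $-O(A(n-1)^2A(n)^{r-3})$ and its upper bound (\ref{eqn-2ndUB}) carries error $+O(A(n-2)A(n)^{r-2})$, so its $\Theta$ is justified only as a two-sided bound on the magnitude of the error. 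Your exact-decomposition skeleton is attractive and could be repaired, but you need (i) an actual entropy-versus-decay argument for the residual class, and (ii) a conclusion stated as a two-sided $O$ rather than a sharpness claim.
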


\noindent Theorem \ref{thm-main} is a simple corollary of Theorem \ref{thm-main2} (as $A(n-1) = o(A(n))$ and $A(n-2) = o(A(n-1))$ from Lemma \ref{lem-ratio}), but we leave them in the paper as separate results. This is because the fundamental insights that give rise to Theorem \ref{thm-main} are obfuscated when refined for the proof of Theorem \ref{thm-main2}. Also, the careful reader will notice that the proof of Theorem \ref{thm-main2} could in theory be extended to deliver further higher-order terms. However, the argument becomes 
more complex and involves a much finer case-by-case treatment.
We hope that perhaps there is another method of proof that quickly gives all terms of $p_{\min}$.  In particular, other proofs of similar statements (e.g. those in  \cite{Canfield,Hammett,PittelMeetJoin}) proceed via generating functions, and it would be nice to find a different proof along these lines. 

\medskip

The remainder of the paper is laid out as follows.  In Section \ref{sec-proof}, we first prove some introductory lemmas about the values $A(n)$ and then use these lemmas to prove Theorem \ref{thm-main}.  Section \ref{sec-higher order terms} describes how to modify this proof to obtain Theorem \ref{thm-main2}.  

\section{Proof of Theorem \ref{thm-main}}\label{sec-proof}
\subsection{Lemmas}

Before starting the proof of Theorem \ref{thm-main}, we begin with several lemmas that help us understand how the values $A(n)$ relate to one another. Throughout we use the convention that $A(0)=1$.

\begin{lemma}\label{lem-increaseAi}
Let $i_1 \geq i_2 \geq 1$.  Then $A(i_1+1)A(i_2-1) \geq A(i_1)A(i_2)$.
\end{lemma}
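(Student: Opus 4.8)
The inequality is a statement of log-convexity of the sequence $(A(n))_{n\ge 0}$. Dividing both sides by $A(i_1)A(i_2-1)$, the claim $A(i_1+1)A(i_2-1) \ge A(i_1)A(i_2)$ is equivalent to
\[
\frac{A(i_1+1)}{A(i_1)} \ge \frac{A(i_2)}{A(i_2-1)}.
\]
Writing $\rho(k) := A(k)/A(k-1)$ for $k \ge 1$ (with the convention $A(0)=1$), the hypothesis $i_1 \ge i_2$ gives $i_1+1 > i_2$, so the inequality follows at once provided $\rho$ is nondecreasing. The plan is therefore to reduce the whole statement to the monotonicity of the consecutive-ratio sequence $\rho$.

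First I would obtain a closed form for $\rho(n)$ by telescoping the product in (\ref{eqn-A(n)formula}). Forming the quotient $A(n)/A(n-1)$, all but one of the numerator factorials $(3k+1)!$ cancel, and the denominator factorials $(n+k)!$ collapse against the $(n-1+k)!$ from $A(n-1)$ once the common block $n!\,\cdots\,(2n-3)!$ is removed. This should yield the compact expression
\[
\rho(n) = \frac{(3n-2)!\,(n-1)!}{(2n-2)!\,(2n-1)!},
\]
valid for $n \ge 1$ and consistent with $\rho(1)=1$; I would sanity-check it against the known values $A(1),\dots,A(4) = 1,2,7,42$, which give $\rho = 2, \tfrac{7}{2}, 6$.

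Next, to show $\rho$ is nondecreasing I would form the ratio of consecutive terms. After cancelling factorials, $\rho(n+1)/\rho(n)$ collapses to a rational function of $n$, namely
\[
\frac{\rho(n+1)}{\rho(n)} = \frac{3(9n^2-1)}{4(4n^2-1)}.
\]
The desired bound $\rho(n+1) \ge \rho(n)$ is then equivalent to $3(9n^2-1) \ge 4(4n^2-1)$, i.e.\ to $11n^2+1 \ge 0$, which holds (strictly) for every $n \ge 1$. Hence $\rho$ is strictly increasing, and since $i_1+1 > i_2 \ge 1$ we get $\rho(i_1+1) \ge \rho(i_2)$, completing the proof.

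I do not anticipate a genuine obstacle: the argument is a matter of careful factorial bookkeeping. The one step demanding attention will be the telescoping that produces the closed form for $\rho(n)$, where one must correctly identify the blocks of factorials that cancel; once that formula is in hand the monotonicity reduces to a trivial quadratic inequality. As an alternative worth noting, one could instead seek a direct combinatorial injection $\frakM_{i_1}\times\frakM_{i_2} \hookrightarrow \frakM_{i_1+1}\times\frakM_{i_2-1}$, but the formula-based route is shorter and entirely self-contained given (\ref{eqn-A(n)formula}).
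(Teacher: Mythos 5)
Your proposal is correct and follows essentially the same route as the paper: the paper's proof likewise reduces the lemma to the monotonicity of the ratio $A(n)/A(n-1)=\frac{(3n-2)!\,(n-1)!}{(2n-1)!\,(2n-2)!}$, asserting it is increasing for $n\ge 1$ and leaving the verification to the reader. Your computation of $\rho(n+1)/\rho(n)=\frac{3(9n^2-1)}{4(4n^2-1)}$ and the reduction to $11n^2+1\ge 0$ correctly supplies exactly those omitted details.
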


\begin{proof}
From (\ref{eqn-A(n)formula}) we have $A(n)/A(n-1)=\frac{(3n-2)!(n-1)!}{(2n-1)!(2n-2)!}$.  As this is an increasing function of $n$ for $n \geq 1$, the lemma follows; we leave the details to the reader.
\end{proof}

\begin{lemma}\label{lem-ratio}
For all $n \geq 1$ and any constant $c \in [n]$, we have
\begin{equation*}
\frac{A(n-c)}{A(n)} \leq \left( \frac{2}{3} \right)^{\binom{n}{2} - \binom{n-c}{2}} =\left(\frac{2}{3}\right)^{c(2n-c-1)/2}.
\end{equation*}
\end{lemma}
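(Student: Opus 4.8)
The plan is to reduce the bound to a clean per-factor estimate on consecutive ratios of $A$. Writing $R(k) := A(k)/A(k-1)$, the formula recorded in the proof of Lemma~\ref{lem-increaseAi}, namely $R(k)=\frac{(3k-2)!(k-1)!}{(2k-1)!(2k-2)!}$, lets me telescope
\[
\frac{A(n-c)}{A(n)} = \prod_{k=n-c+1}^{n} \frac{1}{R(k)}.
\]
On the right-hand side of the claimed inequality I would first observe that the exponent telescopes over exactly the same range: since $\binom{k}{2}-\binom{k-1}{2}=k-1$, we have $\binom{n}{2}-\binom{n-c}{2}=\sum_{k=n-c+1}^{n}(k-1)$, so the target bound can be rewritten as
\[
\prod_{k=n-c+1}^{n} \frac{1}{R(k)} \le \prod_{k=n-c+1}^{n}\Big(\frac{2}{3}\Big)^{k-1}.
\]
(The closed form $\binom{n}{2}-\binom{n-c}{2}=c(2n-c-1)/2$ is a routine expansion.) This matching of products suggests proving the inequality factor by factor, i.e.\ reducing everything to the single claim $R(k)\ge (3/2)^{k-1}$ for every $k\ge 1$.

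I would establish this claim by induction on $k$. The base case $k=1$ is the equality $R(1)=A(1)/A(0)=1=(3/2)^0$. For the inductive step it suffices to show $R(k+1)/R(k)\ge 3/2$. Plugging the factorial formula into $R(k+1)/R(k)$ and cancelling, the expression collapses to the rational function
\[
\frac{R(k+1)}{R(k)} = \frac{3(9k^2-1)}{4(4k^2-1)},
\]
and the desired inequality $\frac{3(9k^2-1)}{4(4k^2-1)}\ge \frac{3}{2}$ is equivalent, after clearing denominators, to $2k^2\ge -2$, which holds trivially. Combining the inductive conclusion $R(k)\ge(3/2)^{k-1}$ across the range $n-c+1\le k\le n$ and multiplying the reciprocals yields exactly the product bound above, completing the argument.

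The genuinely substantive step is the reduction to the per-factor inequality $R(k)\ge(3/2)^{k-1}$ together with the simplification of $R(k+1)/R(k)$; everything else is bookkeeping with binomial coefficients. The one point to get right is that both the product $A(n-c)/A(n)$ and the exponent $\binom{n}{2}-\binom{n-c}{2}$ decompose over the \emph{same} index range $k=n-c+1,\dots,n$, which is what makes the factor-by-factor comparison legitimate. I expect no real obstacle beyond verifying the single-factor bound, and the pleasant collapse of $R(k+1)/R(k)$ to $3(9k^2-1)/(4(4k^2-1))$ makes that verification immediate.
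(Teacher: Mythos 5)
Your proposal is correct, and its skeleton is exactly the paper's: telescope $A(n-c)/A(n)$ into the consecutive ratios $A(k-1)/A(k)$ for $k=n-c+1,\dots,n$, bound each single step by $(2/3)^{k-1}$, and sum the exponents to $\binom{n}{2}-\binom{n-c}{2}=c(2n-c-1)/2$. The one place you diverge is in verifying the per-factor bound. The paper does it directly and without induction: it writes
\[
\frac{A(t-1)}{A(t)}=\frac{(2t-2)(2t-3)\cdots(t)}{(3t-2)(3t-3)\cdots(2t)},
\]
a quotient of $t-1$ paired factors each at most $\frac{2(t-1)}{3(t-1)+1}\le \frac{2}{3}$, which is a two-line argument and even yields the slightly stronger intermediate bound $\bigl(\tfrac{2(t-1)}{3(t-1)+1}\bigr)^{t-1}$. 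You instead prove $R(k)\ge (3/2)^{k-1}$ by induction, via the clean computation $R(k+1)/R(k)=\frac{3(9k^2-1)}{4(4k^2-1)}\ge \frac{3}{2}$ (note $4k^2-1>0$ for $k\ge 1$, so clearing denominators is legitimate, and the base case $R(1)=A(1)/A(0)=1$ uses the paper's convention $A(0)=1$). This costs a bit more algebra than the paper's termwise comparison, but it buys something the paper handles separately: your ratio computation is a quantitative strengthening of the monotonicity of $R(k)$ that the proof of Lemma \ref{lem-increaseAi} asserts and leaves to the reader, so your single calculation would discharge both lemmas at once. All your bookkeeping checks out, including the decomposition of the exponent as $\sum_{k=n-c+1}^{n}(k-1)$ over the same index range as the telescoping product.
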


\begin{proof} 
First of all, for $t\ge 1$ we have
\[
\frac{A(t-1)}{A(t)} =
\frac{(2t-1)!(2t-2)!}{(3t-2)!(t-1)!}
    = \frac{(2t-2)(2t-3)\cdots(t)}{(3t-2)(3t-3)\cdots(2t)}
\]
and so
\begin{equation}\label{eqn-int step}
\frac{A(t-1)}{A(t)} \leq \left( \frac{2(t-1)}{3(t-1)+1} \right)^{t-1} 
    \le \left( \frac{2}{3} \right)^{t-1}.
\end{equation}
But then
\[
\frac{A(n-c)}{A(n)} = \frac{A(n-c)}{A(n-c+1)} \cdot \frac{A(n-c+1)}{A(n-c+2)} \cdot \cdots \cdot \frac{A(n-1)}{A(n)},
\]
and the result follows from using (\ref{eqn-int step}) on each term.
\end{proof}

\begin{remark}
The bound given in Lemma \ref{lem-ratio} can be significantly sharpened using either Stirling-type estimates or the asymptotic formula for $A(n)$, but we will only need this simple bound here. 
\end{remark}

\begin{definition}
Call row $i_0$ in a monotone triangle $\tau=(\tau(i,j))\in\frakM_n$ \emph{distinguished} if it consists of the smallest possible entries $1$, $2$, $3$, $\ldots$, $i_0$, i.e.,
\[
\tau(i_0,j) = j \textrm{ for } 1\le j \le i_0.
\]
\end{definition}

\noindent Notice that row $n$ is distinguished for any monotone triangle $\tau \in \frakM_n$.

\begin{notation}
Let $\calD(\tau)$ denote the set of distinguished rows for a given monotone triangle $\tau \in \frakM_n$. Given a collection of row indices $\calI = \{i_1,i_2,\ldots,i_k\} \subseteq [n-1]$, let 
$\eta_n(\calI)=\eta_n(i_1,i_2,\ldots,i_k)$ denote the number of monotone triangles 
$\tau\in\frakM_n$ such that $\calI \cup\{n\}\subseteq \calD(\tau)$.
When applicable, we will assume that $i_1 < i_2 < \cdots < i_k$.  If no distinguished rows are specified outside of the (bottom) $n$-th row, 
we adopt the notation $\eta_n(\emptyset)$. That is, $\eta_n(\emptyset)$ is the number of 
monotone triangles $\tau\in\frakM_n$ such that 
$\emptyset\cup\{n\}\subseteq \calD(\tau)$; clearly we have $\eta_n(\emptyset)=A(n)$.  
\end{notation}

\begin{lemma}\label{lem-distinguishedrows} For any row indices $i_1,i_2,\ldots,i_k \in [n-1]$, we have
\[
\eta_n(i_1,i_2,\ldots,i_k)=A(i_1)A(i_2-i_1)\cdots A(i_k-i_{k-1})A(n-i_k).
\]
\end{lemma}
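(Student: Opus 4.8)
The plan is to induct on the number $k$ of prescribed distinguished rows, the heart of the matter being a single structural claim: a distinguished row \emph{decouples} the monotone triangle into two independent pieces. Concretely, I would first treat the case of one distinguished row. Suppose $\tau\in\frakM_n$ has row $i_0$ distinguished, so that $\tau(i_0,j)=j$ for $1\le j\le i_0$. Then the portion of $\tau$ lying in rows $1,\ldots,i_0$ is \emph{exactly} a monotone triangle of size $i_0$ (its bottom row $1,2,\ldots,i_0$ is precisely the forced bottom row of such a triangle), while the portion lying in rows $i_0,\ldots,n$ is, after a suitable normalization, a monotone triangle of size $n-i_0$. Because row $i_0$ is completely fixed, the interleaving constraints above row $i_0$ involve only the top piece and those below it involve only the bottom piece, so the two pieces may be chosen \emph{independently}; this already yields $\eta_n(i_0)=A(i_0)A(n-i_0)$ as a warm-up.

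The normalization of the bottom piece is the crux, and I expect it to be the main obstacle. The claim is that in rows $i_0,\ldots,n$ the first $i_0$ columns are \emph{forced} to equal $1,2,\ldots,i_0$. I would prove this by induction on the row index: if row $i$ satisfies $\tau(i,j)=j$ for $j\le i_0$, then the interleaving $\tau(i+1,j)\le\tau(i,j)\le\tau(i+1,j+1)$ together with strict increase along rows forces $\tau(i+1,j)=j$ for $j\le i_0$ as well (starting from $\tau(i+1,1)\le 1$). Deleting these forced columns and the now-empty row $i_0$, and subtracting $i_0$ from every remaining entry, sends the bottom piece to a triangular array whose row $m$ (for $m=0,1,\ldots,n-i_0$) has $m$ entries. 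One checks that the defining inequalities of a monotone triangle are preserved verbatim by the shift, and that the entry-range requirement survives: the lower bound $\tau(i,j)\ge j$ is immediate from strict increase, while the upper bound follows from the standard estimate $\tau(i,j)\le n-i+j$ (valid in any monotone triangle, proved by downward induction from the bottom row $1,\ldots,n$), which after shifting becomes exactly the size-$(n-i_0)$ bound. The resulting array has bottom row $1,2,\ldots,n-i_0$ and is therefore a genuine element of $\frakM_{n-i_0}$; since the construction is visibly reversible, this gives the bijection and the count $A(n-i_0)$.

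With the single-row decoupling in hand, the general statement follows by induction on $k$. Splitting off the topmost prescribed row $i_1$ contributes the independent factor $A(i_1)$ from rows $1,\ldots,i_1$ and identifies the remaining data (rows $i_1,\ldots,n$) with a monotone triangle of size $n-i_1$ via the shift above. Under this shift a row $i>i_1$ is distinguished in $\tau$ if and only if row $i-i_1$ is distinguished in the smaller triangle, so the remaining prescribed indices $i_2,\ldots,i_k$ become $i_2-i_1,\ldots,i_k-i_1$. The inductive hypothesis then gives $\eta_{n-i_1}(i_2-i_1,\ldots,i_k-i_1)=A(i_2-i_1)\cdots A(i_k-i_{k-1})A(n-i_k)$, and multiplying by the independent factor $A(i_1)$ produces the claimed product. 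The base case $k=0$ is the given identity $\eta_n(\emptyset)=A(n)$.
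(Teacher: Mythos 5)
Your proof is correct and follows essentially the same route as the paper's: split the triangle at the topmost prescribed distinguished row $i_1$, observe that all lower rows are forced to begin $1,2,\ldots,i_1$ so that the configuration decouples into an independent size-$i_1$ triangle and a shifted size-$(n-i_1)$ triangle carrying the remaining prescribed rows $i_2-i_1,\ldots,i_k-i_1$, then induct on $k$. The only difference is that you spell out details (the forced-columns induction and the verification of the bijection) that the paper asserts and leaves to the reader.
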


\begin{proof}
We first claim that
\begin{equation}\label{1st-eta-iterate}
\eta_n(i_1,i_2,\ldots,i_k)=A(i_1)\eta_{n-i_1}(i_2-i_1,\ldots,i_k-i_1).
\end{equation}
Indeed, if row $i_1$ is distinguished then the top $i_1$ rows form a monotone triangle of size $i_1$, hence the first factor of $A(i_1)$. The bottom $n-i_1$ rows must all start with the numbers $1$, $2$, $3$, $\ldots$, $i_1$ in their first $i_1$ entries; the remaining entries in these rows are in bijective correspondence with a monotone triangle of size $n-i_1$ given by subtracting $i_1$ from each of these entries, and row $i$ among the last $n-i_1$ rows in the original monotone triangle 
corresponds to row $i-i_1$ in this bijective element. As the rows $i_2$, $\ldots$, $i_k$, and $n$ are distinguished in the original monotone triangle, this means rows with transformed indices $i_2-i_1$, $\ldots$, $i_k-i_1$, and $n-i_1$ are distinguished in the corresponding monotone triangle of size $n-i_1$. Thus we get the second factor $\eta_{n-i_1}(i_2-i_1,\ldots,i_k-i_1)$.

The result now follows from (\ref{1st-eta-iterate}) by induction on $k$.
\end{proof}

\begin{corollary}\label{cor-distinguishedrows}
For any row indices $i_1,i_2,\ldots,i_k \in [n-1]$, we have 
\[
\eta_n(i_1,i_2,\ldots,i_k) \leq A(n-k).
\]
\end{corollary}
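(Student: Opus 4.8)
The plan is to combine Lemma \ref{lem-distinguishedrows}, which expresses $\eta_n(i_1,\ldots,i_k)$ as a product of $A$-values, with the log-convexity of the sequence $A$ that is encoded in Lemma \ref{lem-increaseAi}. First I would record the structural observation behind the bound. Since we assume $i_1 < i_2 < \cdots < i_k$ with $i_k \le n-1$, the arguments appearing in Lemma \ref{lem-distinguishedrows},
\[
a_0 := i_1,\quad a_1 := i_2 - i_1,\quad \ldots,\quad a_{k-1} := i_k - i_{k-1},\quad a_k := n - i_k,
\]
are $k+1$ positive integers summing to $n$. Thus the corollary is equivalent to the inequality $\prod_{j=0}^k A(a_j) \le A(n-k)$ for every composition $(a_0,\ldots,a_k)$ of $n$ into $k+1$ positive parts, where the right-hand side corresponds (via $A(1)=1$) to the extreme composition $(n-k,1,1,\ldots,1)$.

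The technical heart is the two-part case, which I would isolate as a sub-lemma: for all integers $a,b \ge 1$,
\[
A(a)A(b) \le A(a+b-1).
\]
To prove this I would assume without loss of generality that $a \ge b$ and apply Lemma \ref{lem-increaseAi} repeatedly, transferring one unit at a time from the smaller argument to the larger:
\[
A(a)A(b) \le A(a+1)A(b-1) \le A(a+2)A(b-2) \le \cdots \le A(a+b-1)A(1) = A(a+b-1).
\]
Each inequality is a direct instance of Lemma \ref{lem-increaseAi}, and the point requiring a little care --- the main obstacle --- is verifying that every intermediate pair stays within that lemma's hypothesis that the larger argument is $\ge$ the smaller is $\ge 1$. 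This holds because the first argument only increases while the second decreases down to $1$, so $\mathrm{(first)} \ge \mathrm{(second)} \ge 1$ is maintained throughout, and after exactly $b-1$ steps we reach the claimed bound.

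With the sub-lemma in hand, I would finish by induction on $k$. The base case $k=0$ is the trivial equality $A(n)=A(n)$. For the inductive step, I would merge the first two parts using the sub-lemma,
\[
\prod_{j=0}^k A(a_j) \le A(a_0 + a_1 - 1)\prod_{j=2}^k A(a_j),
\]
obtaining a product of $k$ factors whose arguments are positive and sum to $n-1$; the induction hypothesis then bounds this by $A((n-1)-(k-1)) = A(n-k)$, completing the argument. I would close with a remark that, conceptually, this is nothing more than the assertion that $\prod_j A(a_j)$, taken over compositions of $n$ into $k+1$ positive parts, is maximized at the composition that is most spread out in the majorization order, namely $(n-k,1^k)$ --- which is exactly the Schur-convexity consequence of the log-convexity of $A$ supplied by Lemma \ref{lem-increaseAi}.
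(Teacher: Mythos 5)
Your proof is correct and follows essentially the same route as the paper: expand $\eta_n(i_1,\ldots,i_k)$ as a product of $A$-values via Lemma \ref{lem-distinguishedrows}, then repeatedly apply Lemma \ref{lem-increaseAi} to push all the mass into a single factor, leaving $A(1)^k A(n-k) = A(n-k)$. The paper states this in one line, while you carefully organize the unit-transfer steps into a two-part sub-lemma plus induction on $k$; that extra detail (including the check that the hypothesis $i_1 \ge i_2 \ge 1$ holds at each step) is exactly what the paper leaves implicit.
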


\begin{proof}
Using Lemmas \ref{lem-increaseAi} and \ref{lem-distinguishedrows}, we have
\begin{eqnarray*}
\eta_n(i_1,i_2,\ldots,i_k) &=& A(i_1)A(i_2-i_1)A(i_3-i_2)\cdots A(i_k-i_{k-1})A(n-i_k)\\
&\leq& A(1)A(1)A(1)\cdots A(1)A(n-k).
\end{eqnarray*}
\end{proof}

\begin{remark}
We should be careful to point out here that Corollary \ref{cor-distinguishedrows} will be of particular utility. It says that the number of monotone triangles containing a prescribed collection of $k$ distinguished rows (\emph{not} including the very last $n$-th row) is at most 
\[
\eta_n(1,2,\ldots,k)=\eta_n(n-k,n-k+1,\ldots,n-1)=A(n-k). 
\]
Said another way, the count of monotone triangles containing $k$ prescribed distinguished rows outside of row $n$ is no more than the count of monotone triangles with at least the top $k$ rows distinguished in addition to row $n$, or similarly the count of monotone triangle with at least the bottom $k+1$ rows distinguished (as both these counts are $A(n-k)$ by Lemma \ref{lem-distinguishedrows}).
\end{remark}

\subsection{The proof}

By considering all $(\tau_1,\ldots,\tau_r)$ such that \emph{exactly} one of them is $\tau_{\min}$, we have
\begin{equation}\label{eqn-lower bound}
p_{\min} \geq \frac{r(A(n)-1)^{r-1}}{A(n)^r} = \frac{r(1-o(1))}{A(n)}.
\end{equation}
We will now produce a matching upper bound.  Throughout the proof, we assume that $n$ is large enough to support our assertions.

Recall that row $i$ in a monotone triangle $\tau$ is distinguished if it consists of entries $1$, $2$, $\ldots$, $i$.  Notice that in order for $\inf(\tau_1,\ldots,\tau_r)=\tau_{\min}$, we must have each row distinguished in at least one of $\tau_1, \ldots, \tau_r$. This observation leads us to focus on the locations of the distinguished rows among the $\tau_j$.

For an $r$-tuple $(\tau_1,\ldots, \tau_r)$ with $\inf(\tau_1,\ldots,\tau_r) = \tau_{\min}$, let $D_1,\ldots,D_r$ be subsets of $[n]$ so that $D_j=\calD(\tau_j)$ is the set of distinguished rows of $\tau_j$. Then the conditions
\begin{equation}\label{infCondition}
D_1 \cup \cdots \cup D_r = [n]
\end{equation}
and 
\begin{equation}\label{infCondition2}
n \in D_1 \cap \cdots \cap D_n
\end{equation}
must both hold. Using a trivial upper bound, there are at most $2^{rn}$ ways of choosing an $r$-tuple $(D_1,\ldots,D_r)$ satisfying (\ref{infCondition}) and (\ref{infCondition2}).

Much of the rest of the proof is devoted to finding an upper bound on the number of $(\tau_1,\ldots, \tau_r)$ with $\inf(\tau_1,\ldots,\tau_r) = \tau_{\min}$ that correspond to a particular $(D_1,\ldots,D_r)$.  We will do this by producing an upper bound that holds for an entire class of $r$-tuples $(D_1,\ldots,D_r)$, and so we define these classes now.

For $i=0,\ldots,6r$, let $\calC_{n-i} = \calC_{n-i}(r)$ be the collection of all $(\tau_1,\ldots,\tau_r)$ with $\inf(\tau_1,\ldots,\tau_r)=\tau_{\min}$ so that some $D_j$ (associated to $\tau_j$) has \emph{exactly} $n-i$ consecutive elements from $[n]$, where $n-i$ is maximal 
in $D_j$ (and so, in particular, $D_j$ has no larger amount of consecutive elements from $[n]$).  In other words, $\calC_{n-i}$ consists of those $r$-tuples $(\tau_1,\ldots,\tau_r)$ with $\inf(\tau_1,\ldots,\tau_r)=\tau_{\min}$ so that some $\tau_j$ has a block of exactly $n-i$ consecutive distinguished rows. (It should be emphasized that whenever we say ``consecutive'' for the remainder of this paper, it should be interpreted as maximally consecutive as we have done here.) See Figure \ref{fig:ex}. Then let $\calC_{\leq n-6r-1}$ be the collection of all $(\tau_1,\ldots,\tau_r)$ with $\inf(\tau_1,\ldots,\tau_r)=\tau_{\min}$ so that all $D_j$ have at most $n-6r-1$ consecutive elements from $[n]$.  Clearly, for any $(\tau_1,\ldots,\tau_r)$ with $\inf(\tau_1,\ldots,\tau_r)=\tau_{\min}$ we have $(\tau_1,\ldots,\tau_r) \in \calC_{n} \cup \cdots \cup \calC_{n-6r} \cup \calC_{\leq n-6r-1}$. 

\begin{figure}[ht]
\centering
\[
\begin{array}{c}
\tau_1 =
\begin{tabular}{ ccccccc }
  &   &   & 1 &   &   &   \\
  &   & 1 &   & 2 &   &   \\
  & 1 &   & 2 &   & 4 &   \\
1 &   & 2 &   & 3 &   & 4 \\
\end{tabular}
\qquad
\tau_2 =
\begin{tabular}{ ccccccc }
  &   &   & 2 &   &   &   \\
  &   & 1 &   & 3 &   &   \\
  & 1 &   & 2 &   & 3 &   \\
1 &   & 2 &   & 3 &   & 4 \\
\end{tabular}
\end{array}
\]
\caption{An example $(\tau_1,\tau_2)$ with $\inf(\tau_1,\tau_2) = \tau_{\min}$. In this example $D_1 = \{1,2,4\}$, $D_2 = \{3,4\}$, and $(\tau_1,\tau_2) \in \mathcal{C}_{4-2}$.}
\label{fig:ex}
\end{figure}

First, notice that 
\begin{equation}\label{eqn-case1}
|\calC_{n}| \leq rA(n)^{r-1},
\end{equation}
which follows since $(\tau_1,\ldots,\tau_r) \in \calC_n$ implies that $D_j = [n]$ for some $j$.  That is, some $\tau_j = \tau_{\min}$ in this case. Our goal is to show that the number of elements in the remaining classes is small relative to this upper bound.  

Next, consider $\calC_{n-1}$.  How many $(\tau_1,\ldots,\tau_r)$ are in $\calC_{n-1}$?  There are $r$ choices for the monotone triangle $\tau_j$ which will have the $n-1$ consecutive distinguished rows.  In fact, the $n-1$ consecutive distinguished rows must be rows $2,3,\ldots,n$, as $n \in D_j$, and so the first row of $\tau_j$ must be a $2$.  There are $r-1$ choices for $k$ such that $\tau_k$ has a distinguished first row.  Once $k$ is fixed, by Lemma \ref{lem-distinguishedrows} there are $A(n-1)$ possible monotone triangles $\tau_k$ satisfying this condition, and we place no restriction on the remaining $\tau_{\ell}$.  This means that 
\begin{equation}\label{eqn-case2}
|\calC_{n-1}| \leq r(r-1)A(n-1)A(n)^{r-2}.
\end{equation}

We'll consider $\calC_{n-i}$ for $i=2,\ldots,6r$ via the following lemma.

\begin{lemma}\label{lem-small classes}
Let $r \geq 2$ and $i \in \{2,3,\ldots,6r\}$.  Then
\[
|\calC_{n-i}| \leq  r(r-1)^2 i A(i+1)A(i)A(n-i+1)A(n)^{r-2}.
\]
\end{lemma}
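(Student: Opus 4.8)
The plan is to fix the triangle that realizes the long block and then account separately for (i) the freedom inside that triangle and (ii) the cost of covering the rows it leaves undistinguished. First I would choose the index $j$ for which $\tau_j$ carries a maximal run of exactly $n-i$ consecutive distinguished rows; there are $r$ such choices. Writing the run as rows $a,a+1,\ldots,a+n-i-1$, the decisive elementary observation is that row $n$ is distinguished in \emph{every} monotone triangle, so the run cannot sit at rows $i,\ldots,n-1$ (that would drag row $n$ into it and make the length $n-i+1$, contradicting maximality). Hence the starting index satisfies
\[
a\in\{1,\ldots,i-1\}\cup\{i+1\},
\]
giving exactly $i$ admissible positions; this is the source of the factor $i$.

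For a fixed position $a$, the run being distinguished pins down every entry in those rows, so the only freedom in $\tau_j$ lives in the $a-1$ rows above the run and the $i-a+1$ rows below it. Exactly as in the proof of Lemma \ref{lem-distinguishedrows}, the part above is a monotone triangle of size $a$ and the part below one of size $i-a+1$, so there are $A(a)A(i-a+1)\le A(i+1)$ choices for $\tau_j$, the inequality being Lemma \ref{lem-increaseAi}. This produces the factor $A(i+1)$.

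Next I would invoke $\inf(\tau_1,\ldots,\tau_r)=\tau_{\min}$. Let $R$ be the set of rows of $[n]$ outside the run of $\tau_j$, of size $i-1$ (or $i$ when $a=i+1$); by the prefix observation recorded just before this subsection, each row of $R$ must be \emph{fully} distinguished in some $\tau_k$. The key structural feature is that $R$ is a union of at most two consecutive blocks, a low block $\{1,\ldots,a-1\}$ and a high block $\{a+n-i,\ldots,n-1\}$. If a single triangle $\tau_k$ is charged with all of $R$, then by Lemma \ref{lem-distinguishedrows} its distinguished set $\{1,\ldots,a-1\}\cup\{a+n-i,\ldots,n\}$ has a single nontrivial gap, of length $(a+n-i)-(a-1)=n-i+1$, so $\tau_k$ ranges over at most $A(n-i+1)$ triangles while the other $r-2$ range freely over $A(n)^{r-2}$. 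Splitting $R$ among several triangles never beats this: for a two-way split one compares $A(a+n-i)A(n-a+1)$ against $A(n-i+1)A(n)$, and since both pairs of arguments sum to $2n-i+1$ while the all-in-one pair is the more spread out, Lemma \ref{lem-increaseAi} gives $A(a+n-i)A(n-a+1)\le A(n-i+1)A(n)$ (with finer splits bounded by Corollary \ref{cor-distinguishedrows}). Thus $A(n-i+1)A(n)^{r-2}$ is the governing factor for covering $R$, and this is exactly why the high (non-bottom) positions are the binding case.

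Assembling these pieces yields a bound of the shape $r\cdot i\cdot A(i+1)\cdot(\text{covering count})$, and the remaining task — the step I expect to be the main obstacle — is to bound the covering count by $(r-1)^2A(i)A(n-i+1)A(n)^{r-2}$. The naive union bound over which of the $r-1$ triangles distinguishes each of the $\sim i$ rows of $R$ costs a factor $(r-1)^{i-1}$, which is hopeless once $r$ is large. The two-block structure is what must be exploited: one should reduce the bookkeeping to naming only the triangle responsible for the high block and the one responsible for the low block (a covering triangle otherwise pays the large gap factor $A(a+n-i)$ unless it also absorbs the low block and collapses the gap to $A(n-i+1)$), which is what yields the $(r-1)^2$; the residual position- and combinatorial factors are then swallowed by the constant $A(i)$, whose super-exponential growth over the bounded range $i\le 6r$ dominates them. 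Making this reduction rigorous \emph{uniformly} over all $a\in\{1,\ldots,i-1,i+1\}$ — not merely at the extreme positions where a single covering triangle obviously suffices — is the delicate heart of the argument.
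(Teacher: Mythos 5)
Your setup is sound and matches the paper's opening moves: the $r$ choices of $j$, the observation that row $n$'s automatic distinguishedness kills the starting positions $a=i$ (hence exactly $i$ placements), and the bound $A(a)A(i-a+1)\le A(i+1)$ for the number of admissible $\tau_j$ are all correct. But the proof has a genuine gap exactly where you flag it, and the gap is fatal to the argument as structured. You try to account for how \emph{all} rows outside the run get covered, which forces you into either (a) the assignment union bound costing $(r-1)^{|R|}$ — which, as you note, cannot be absorbed into $(r-1)^2A(i)$ uniformly (e.g.\ $i=4$, $r$ large: $(r-1)^{i-3}=r-1>A(4)=42$), or (b) the case split "one triangle covers all of $R$ / the blocks are split between two triangles," which is \emph{not exhaustive}: coverage of a block can be fragmented among several triangles, and moreover rows of $R$ not adjacent to the run can be covered by $\tau_j$ itself, so there need not exist any single triangle "responsible for the high block." Your proposed fix — "name only the triangle responsible for each block" — is therefore not well-defined, and you explicitly concede that making it rigorous is open. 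A proof cannot end there.

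The idea you are missing is a relaxation, not a refinement: by maximality of the run, the single row directly \emph{above} it and the single row directly \emph{below} it are certainly not distinguished in $\tau_j$, so each must be distinguished in some $\tau_k$, $\tau_\ell$ with $k,\ell\ne j$ (possibly $k=\ell$). The paper tracks \emph{only these two rows} and ignores how every other row of $R$ gets covered (harmless, since we want an upper bound). Choosing $k$ and $\ell$ costs $(r-1)^2$; with $i_1$ rows above the run and $i_2=i-i_1$ below, Lemma \ref{lem-distinguishedrows} gives at most $A(i_1)A(n-i_1)\cdot A(n-i_2+1)A(i_2-1)$ choices for $(\tau_k,\tau_\ell)$ when $k\ne\ell$, and Lemma \ref{lem-increaseAi} smooths this to $A(i-1)A(n-i+1)A(n)$; the remaining triangles are arbitrary. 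Summing over the $i-2$ intermediate positions and treating the top and bottom positions separately (the bottom one is where the $A(i+1)$ enters) yields the stated bound with no exponential-in-$i$ factor ever appearing. In short: the rigorous version of your "two-block bookkeeping" is precisely to retreat from blocks to the two boundary rows, and without that step your argument does not close.
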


\begin{proof}
Choose (in $r$ ways) $j$ so that $\tau_j$ has exactly $n-i$ consecutive distinguished rows.  There are $i$ places for the $n-i$ consecutive distinguished rows to occur (they can't be rows $i,\ldots,n-1$). First assume that the block of $n-i$ rows is not 
the first or last $n-i$ rows; we will deal with these two cases later.  Since there are $n-i$ distinguished rows in $[n-1]$, by Corollary \ref{cor-distinguishedrows} there are at most $A(i)$ possibilities for a monotone triangle $\tau_j$ that has a fixed placement of the $n-i$ consecutive distinguished rows.  

Next, we choose $k \neq j$ corresponding to a monotone triangle $\tau_k$ that has a distinguished row directly preceding the $n-i$ consecutive distinguished rows of $\tau_j$, and similarly we choose $\ell \neq j$ corresponding to a monotone triangle $\tau_\ell$ (where potentially $\ell = k$) that has a distinguished row directly following the $n-i$ consecutive distinguished rows of $\tau_j$. Let $i_1$ be the number of undetermined rows that precede the $n-i$ distinguished rows of $\tau_j$, and let $i_2 = i-i_1$.  In other words, $\tau_k$ has row $i_1$ distinguished and $\tau_{\ell}$ has row $n-i_2+1$ distinguished.

If $k \neq \ell$, then by Lemma \ref{lem-distinguishedrows} there are $A(i_1)A(n-i_1)A(n-i_2+1)A(i_2-1)$ choices for monotone triangles $\tau_k$ and $\tau_\ell$ subject to these restrictions.  
As $i_1 + i_2 = i$ and $i_1, i_2 \geq 1$, repeated applications of Lemma \ref{lem-increaseAi} delivers
\[
A(i_1)A(i_2-1) \leq A(i-1)
\]
and
\begin{equation*}\label{eqn-ub first 3 rows}
A(i_1)A(n-i_1)A(n-i_2+1)A(i_2-1) \leq A(i-1)A(n-i+1)A(n).  
\end{equation*}

If $k = \ell$, then similarly we have at most $A(i-1)A(n-i+1)$ choices for $\tau_k$.
Finally, we let the remaining $r-3$ ($r-2$, if $k=\ell$) monotone triangles be arbitrary. 

Putting these pieces together, in these $i-2$ cases we obtain the upper bound
\begin{equation}\label{lemCase1}
r(r-1)^2 A(i)A(i-1)A(n-i+1)A(n)^{r-2}.
\end{equation}

Now what about the case where the block of $n-i$ distinguished rows is at the bottom of $\tau_j$? Here, we need only choose 
$k \neq j$ so that $\tau_k$ will have distinguished row $i$. Then, $\tau_j$ can be completed outside of its bottom $n-i$ 
distinguished rows in $A(i+1)$ ways, and $\tau_k$ can be completed outside of its distinguished $i$-th row in $A(i)A(n-i)$ ways. Letting the other 
$n-2$ monotone triangles be 
arbitrary delivers the bound 
\begin{equation}\label{lemCase2}
r(r-1)A(i+1)A(i)A(n-i)A(n)^{r-2}
\end{equation}
in this case. A similar analysis shows that the case where the block of $n-i$ distinguished rows is at the top of $\tau_j$ delivers the bound
\begin{equation}\label{lemCase3}
r(r-1)A(i)A(i-1)A(n-i+1)A(n)^{r-2}. 
\end{equation}
Combining the $i-2$ ``intermediate'' cases and their common bound given by (\ref{lemCase1}) with the top/bottom cases and their bounds given by (\ref{lemCase2}) and (\ref{lemCase3}), we obtain
\begin{align*}
|\calC_{n-i}| & \leq r(r-1)^2 (i-2) A(i)A(i-1)A(n-i+1)A(n)^{r-2} \\
&  \hspace{1in} + r(r-1)A(i+1)A(i)A(n-i)A(n)^{r-2}\\
&  \hspace{1in} + r(r-1)A(i)A(i-1)A(n-i+1)A(n)^{r-2}\\ 
& \le r(r-1)^2 i A(i+1)A(i)A(n-i+1)A(n)^{r-2},
\end{align*}
as claimed.
\end{proof}

\bigskip

Finally, we need to bound $|\calC_{\leq n-6r-1}|$.  Here, we already know that there are at most $2^{rn}$ possible choices for an $r$-tuple $(D_1,\ldots,D_r)$ satisfying (\ref{infCondition}) and (\ref{infCondition2}).  We fix such a $(D_1,\ldots,D_r)$ and bound the number of $r$-tuples $(\tau_1,\ldots,\tau_r) \in \calC_{\leq n-6r-1}$ 
that correspond to this fixed $r$-tuple $(D_1,\ldots,D_r)$.

A precise count of these $r$-tuples would be difficult to compute, but we only need an upper bound. But surely this count is at most 
\[
\prod_{j=1}^r \eta_n\left(D_j\setminus \{n\}\right);
\]
indeed, the number of $\tau\in\frakM_n$ with \emph{precisely} the distinguished rows $D_j$ is certainly at most the number of $\tau\in\frakM_n$ such that $D_j\subseteq \calD(\tau)$. Then, from Corollary \ref{cor-distinguishedrows} we have 
\begin{equation}\label{eqn-<n-6r 1}
\prod_{j=1}^r \eta_n\left(D_j\setminus \{n\}\right) \leq \prod_{j=1}^r A(n-|D_j|+1).
\end{equation}

Now let index $m$ be such that $|D_m|$ is maximal. First, we'll suppose that $|D_m| \leq n-6r$. Notice that 
\begin{equation}\label{eqn-lb D_i}
(|D_1|-1)+(|D_2|-1) + \cdots + (|D_r|-1) \geq n-1
\end{equation}
which implies that $$|D_m|\ge \frac{n-1}{r}+1=\frac{n+r-1}{r}\ge\frac{n}{r},$$
and so
\[
|D_m|\ge \ceil*{n/r}.
\]
Then by Lemma \ref{lem-increaseAi} we have
\begin{equation}\label{eqn-<n-6r 1a}
\prod_{j=1}^r A(n-|D_j|+1) \leq A(n-|D_m|+1)A(\delta)A(n)^{r-2},
\end{equation}
where $\delta:=\max\{n-|D_1|-|D_2|-\cdots-|D_r|+r+(|D_m|-1),0 \}$. Indeed, let $m_2\neq m$ denote the index of the second largest value among $|D_1|,\ldots,|D_n|$. All we have done in order to obtain (\ref{eqn-<n-6r 1a}) is to leave the $m$-th factor alone while repeatedly increasing the argument sizes of the $r-2$ factors with index not in $\{m,m_2\}$, one at a time, always at the expense of the factor with index $m_2$ (via Lemma \ref{lem-increaseAi}). If it should happen that factor $m_2$'s argument decreases to $0$ in the process, we then leave that argument alone and simply continue to increase the other $r-2$ arguments (other than the $m$-th) all the way up to $n$.

Then using $|D_m|\le n-6r$, (\ref{eqn-lb D_i}), and (\ref{eqn-<n-6r 1a}), we see that
\begin{eqnarray}
\nonumber \prod_{j=1}^r A(n-|D_j|+1)&\leq&A(n-|D_m|+1)A(\delta)A(n)^{r-2}\\
\nonumber &\leq& A(n-|D_m|+1)A(|D_m|)A(n)^{r-2}\\
\label{eqn-<n-6r 2} &\leq& A(6r+1)A(n-6r)A(n)^{r-2},
\end{eqnarray}

\noindent where the last inequality uses Lemma \ref{lem-increaseAi} along with $\ceil*{n/r} \leq |D_m| \leq n-6r$ (and so $A(n-\ceil*{n/r}+1)A(\ceil*{n/r}) \leq A(6r+1)A(n-6r)$ for large enough $n$).

If instead $|D_m| > n-6r$, then we consider the $n-6r$ consecutive rows $3r,3r+1,\ldots,n-3r-1$.  Since $\tau_m$ cannot have $n-6r$ consecutive distinguished rows, by (\ref{infCondition}) we know that some other $\tau_k$ must have a distinguished row from among rows $3r,3r+1,\ldots,n-3r-1$.  Corollary \ref{cor-distinguishedrows} gives at most $A(n-|D_m|+1) \leq A(6r)$ choices for such a $\tau_m$, and Lemmas \ref{lem-distinguishedrows} and \ref{lem-increaseAi} give at most $A(3r)A(n-3r)$ choices for such a $\tau_k$. Putting no restrictions on the remaining monotone triangles,  we have at most
\begin{equation}\label{eqn-<n-6r 3}
A(6r)A(3r)A(n-3r)A(n)^{r-2}
\end{equation}
monotone triangles corresponding to $(D_1,\ldots,D_r)$ where $|D_m| > n-6r$.  Since there are at most $2^{rn}$ $r$-tuples $(D_1,\ldots,D_r)$ and from (\ref{eqn-<n-6r 2}) and (\ref{eqn-<n-6r 3}) each of these corresponds to at most $A(6r)A(3r)A(n-3r)A(n)^{r-2}$ $r$-tuples $(\tau_1,\ldots,\tau_r)$ with $\inf(\tau_1,\ldots,\tau_r) = \tau_{\min}$, for large enough $n$ we have
\begin{equation}\label{eqn-case3}
|\calC_{\leq n-6r-1}| \leq 2^{rn}A(6r)A(3r)A(n-3r)A(n)^{r-2}.
\end{equation}

\medskip

We are now ready to finish the calculation. We have (with the main inequalities justified below, and all implied constants depending on $r$)
\begin{eqnarray}
\nonumber p_{\min} &=& \frac{|\{(\tau_1,\ldots,\tau_r) \colon \inf(\tau_1,\ldots,\tau_r)=\tau_{\min}\}|}{A(n)^r}\\
\nonumber &\leq& \frac{|\calC_n| + |\calC_{n-1}| + \cdots + |\calC_{n-6r}| + |\calC_{\leq n-6r-1}|}{A(n)^r}\\
\label{eqn-1} &\leq& \frac{rA(n)^{r-1} + O( A(n-1)A(n)^{r-2})+O( 2^{rn}A(n-3r)A(n)^{r-2})}{A(n)^{r}}\\
\label{eqn-2}&\leq& \frac{r +  O((2/3)^{n}) + O(2^{rn}(2/3)^{3rn})}{A(n)}\\
\label{eqn-upper bound} &=& \frac{r(1+o(1))}{A(n)},
\end{eqnarray}
where (\ref{eqn-1}) follows from (\ref{eqn-case1}), (\ref{eqn-case2}), Lemma \ref{lem-increaseAi}, Lemma \ref{lem-small classes} and (\ref{eqn-case3}), while (\ref{eqn-2}) follows from Lemma \ref{lem-ratio}. Therefore, from (\ref{eqn-lower bound}) and (\ref{eqn-upper bound}) we have
\[
p_{\min} \sim \frac{r}{A(n)}.
\]

\section{Second-order term}\label{sec-higher order terms}

In this section, we describe how the ideas from the proof of Theorem \ref{thm-main} can be generalized to produce a second-order term for $p_{\min}$.  By inspection, there are $1$, $1$, and $6$ monotone triangles with a maximal block of exactly $n$, $n-1$, and $n-2$ consecutive distinguished rows, respectively.  In particular, 
\begin{equation}\label{eqn-n-3}
\begin{tabular}{c}
\text{there are $A(n)-8$ monotone triangles of size $n$}\\
\text{with at most $n-3$ consecutive distinguished rows.}
\end{tabular}
\end{equation}

Let $\tau_{\min}'$ denote the monotone triangle obtained from $\tau_{\min}$ by changing the top row to a $2$, and $\tau_{\min}''$ denote the monotone triangle 
obtained from $\tau_{\min}$ by changing row $n-1$ to $1$, $2$, $3$, $\cdots$, $n-3$, $n-2$, $n$.  It is important to note that $\tau_{\min}'$ and $\tau_{\min}''$ each have more than $n-3$ consecutive distinguished rows. 

We produce a lower bound as follows: Consider all $(\tau_1,\ldots,\tau_r)$ such that: 
\begin{enumerate}
	\item one $\tau_j$ is $\tau_{\min}$, and the others have at most $n-3$ consecutive distinguished rows; or
	\item one $\tau_j$ is $\tau_{\min}'$, a second $\tau_k$ has row $1$ distinguished \emph{and} has at most $n-3$ consecutive distinguished rows, and the remaning $\tau_{\ell}$ do not have row $1$ distinguished \emph{and} have at most $n-3$ consecutive distinguished rows; or
	\item one $\tau_j$ is $\tau_{\min}''$ , a second $\tau_k$ has row $n-1$ distinguished \emph{and} has at most $n-3$ consecutive distinguished rows, and the remaining $\tau_{\ell}$ do not have row $n-1$ distinguished \emph{and} have at most $n-3$ consecutive distinguished rows. 
\end{enumerate} 

\noindent Notice that the collections of $r$-tuples in the three cases above are disjoint, since exactly one monotone triangle $\tau_j$ will be either $\tau_{\min}$, $\tau_{\min}'$, or $\tau_{\min}''$. Using (\ref{eqn-n-3}), there are $r(A(n)-8)^{r-1} = rA(n)^{r-1} - \Theta(A(n)^{r-2})$ $r$-tuples $(\tau_1,\ldots,\tau_r)$ in the first collection.  

For the second collection, there are $r$ choices for the index $j$ followed by $r-1$ choices for $k$. To compute the number of possibilities for $\tau_k$ in this collection we use a union bound: subtract the total number of monotone triangles that \emph{do not} have row 1 distinguished (there are $A(n)-A(n-1)$ of these) from the number of monotone triangles with at most $n-3$ consecutive distinguished rows given by (\ref{eqn-n-3}) to obtain at least $(A(n) - 8) - (A(n)-A(n-1)) = A(n-1)-8$ choices for $\tau_k$. 
Lastly, from (\ref{eqn-n-3}) and another union bound there are at least $(A(n)-8)-A(n-1)$ choices for each of the $r-2$ remaining monotone triangles $\tau_{\ell}$.

The enumeration for the third collection is very similar. There are $r$ choices for $j$, $r-1$ choices for $k$, at least $A(n-1) - 8$ choices for $\tau_k$, and at least $A(n)-A(n-1)-8$ choices for each of the $r-2$ remaining monotone triangles $\tau_{\ell}$. Therefore,
\begin{eqnarray}
\nonumber p_{\min} &\geq& \frac{r(A(n)-8)^{r-1} + r(r-1)(A(n-1)-8)(A(n)-A(n-1)-8)^{r-2}}{A(n)^r}\\
\nonumber && \quad +\,\, \frac{r(r-1)(A(n-1)-8)(A(n)- A(n-1)-8)^{r-2}}{A(n)^r}\\
\label{eqn-2ndLB} &=& \frac{rA(n)^{r-1} + 2r(r-1)A(n-1)A(n)^{r-2} - O(A(n-1)^2 A(n)^{r-3})}{A(n)^r}
\end{eqnarray}

\bigskip

For a corresponding upper bound, we use 
\[
|\calC_{n}| \leq rA(n)^{r-1} \textrm{ and } |\calC_{n-1}| \leq r(r-1)A(n-1)A(n)^{r-2},
\]
as before.  We can refine $\calC_{n-2}$ to find those $r$-tuples with a $\tau_j$ having the top $n-2$ rows distinguished (giving $1$ choice for the last entry in row $n-1$ of $\tau_j$, as row $n-1$ is \emph{not} distinguished; i.e., $\tau_j=\tau_{\min}''$), and a $k \neq j$ so that  $\tau_k$ has row $n-1$ distinguished. There are $r$ choices for $j$, $r-1$ choices for $k$,  $A(n-1)$ choices for $\tau_{k}$, and we let the rest of the choices be arbitrary.  This gives an upper 
bound of $r(r-1)A(n-1)A(n)^{r-2}$ for the size of this subset of $\calC_{n-2}$.  From the proof of Lemma \ref{lem-small classes} the remaining subset of $\calC_{n-2}$ has size at most $O(A(n-2)A(n)^{r-2})$ (specifically, see the discussion around (\ref{lemCase2})).

Using the bounds for $|\calC_{n-3}|,\ldots,|\calC_{n-6r}|$ obtained in Lemma \ref{lem-small classes} and the bound for $|\calC_{\leq n-6r-1}|$ from (\ref{eqn-case3}), we have
\begin{eqnarray}
\nonumber p_{\min} &\leq& \frac{rA(n)^{r-1} + 2r(r-1)A(n-1)A(n)^{r-2}}{A(n)^r}\\
\nonumber && \quad +\,\, \frac{O(A(n-2)A(n)^{r-2})+O(2^{rn}A(n-3r)A(n)^{r-2})}{A(n)^{r}}\\
\nonumber &=& \frac{rA(n)^{r-1} + 2r(r-1)A(n-1)A(n)^{r-2}}{A(n)^r}\\
\label{big-O absorb} && \quad +\,\, \frac{O(A(n-2)A(n)^{r-2})+O(2^{rn}A(n-2)(2/3)^{(3r-2)n}A(n)^{r-2})}{A(n)^{r}}\\ 
\label{eqn-2ndUB} &=& \frac{rA(n)^{r-1} + 2r(r-1)A(n-1)A(n)^{r-2} + O(A(n-2)A(n)^{r-2})}{A(n)^{r}};
\end{eqnarray}

\noindent here, moving from line (\ref{big-O absorb}) to (\ref{eqn-2ndUB}) we have used $r\ge 2$.

Notice that 
$A(n-1)^2\le A(n-2)A(n)$ (via Lemma \ref{lem-increaseAi}), and so (\ref{eqn-2ndLB}) and (\ref{eqn-2ndUB}) imply that
\[
p_{\min} = \frac{rA(n)^{r-1} + 2r(r-1)A(n-1)A(n)^{r-2} + \Theta(A(n-2)A(n)^{r-2})}{A(n)^r}.
\]


\end{document}